\theoremstyle{plain}
\newtoks\thehProclaim
\newtheorem*{Proclaim}{\the\thehProclaim}
\newtheorem{proposition}{Proposition}
\newtheorem{theorem}{Theorem}
\newtheorem{lemma}{Lemma}
\theoremstyle{definition}
\newtheorem{definition}{Definition}
\newcommand{\eps}{\varepsilon}
\newcommand{\tow}{\mathcal{R}}
\newcommand{\mapp}{M_{\rm{ap}}}
\newcommand{\si}{\mathfrak{s}}
\newcommand{\orb}{\mathrm{Orb}}
\newcommand{\orbp}{{\mathrm{Orb}_+}}
\newcommand{\T}{\mathcal{T}}
\newcommand{\OT}{\mathcal{OT}}
\newcommand{\OTP}{\mathcal{OTP}}
\newcommand{\ot}{\mathrm{ot}}
\newcommand{\otp}{\mathrm{otp}}
\newcommand{\uag}{UA}
\title{On a universal Borel adic space\thanks{This work is supported by the Program of the Presidium of the Russian
Academy of Sciences No.~01 ``Fundamental Mathematics and its Applications''
under grant PRAS-18-01.}}
\author{A. M. Vershik\thanks{St.Petersburg Department of Steklov Institute of Mathematics
and St.Petersburg State University, St.Petersburg, Russia.
E-mail: avershik@pdmi.ras.ru.} \and P. B. Zatitskii\thanks{St.Petersburg State University and St.Petersburg Department of Steklov Institute of Mathematics, St.Petersburg, Russia. E-mail:
pavelz@pdmi.ras.ru.}}
\date{September 26, 2018}
\begin{document}

\maketitle

\begin{abstract}{We prove that the so-called uniadic graph and its adic automorphism
are Borel universal, i.e., every aperiodic Borel automorphism is
isomorphic to the restriction of this automorphism to a subset
invariant under the adic transformation, the isomorphism being defined
on a universal (with respect to the measure) set. We develop the
concept of basic filtrations and combinatorial definiteness of
automorphisms suggested in our previous paper. Bibliography: $10$ titles.}
 \end{abstract}

\section{Introduction}
In~\cite{V81, V82}, the first author proved that every ergodic automorphism of a Lebesgue space has an adic realization, i.e., is isomorphic to the adic shift on the path space of some graded graph equipped with a central measure. In~\cite{VZ18}, it is proved that for such a graph one can always take the
so-called uniadic graph~${\rm{\uag}}$
(see~Sec.~\ref{sec2}), varying only a central measure
on its path space. The purpose of this note is to prove
a Borel analog of this result: every aperiodic Borel
automorphism of a separable metric space can be realized in the path space of the uniadic graph. The proof  is based on the construction of a so-called basic Borel filtration of a given automorphism. For more details on the history of the problem, see~\cite{V17,VZ18}.

The relation between the Borel and metric approaches in dynamics and representation theory has been considered in a number of papers, see~\cite{Kech,Kan,Tho,Sch}. One of the problems linking both approaches is to describe all invariant measures for a given Borel automorphism or a Borel filtration.

The natural question, which is solved in the affirmative in this paper, is whether one can define a universal automorphism in a standard Borel space and an approximation of this automorphism so that every Borel automorphism of a standard Borel space is isomorphic (up to a set of zero measure for all aperiodic measures) to the restriction of this automorphism to an invariant subset, see Theorem~\ref{thborel}. It turns out that such a Borel space is the path space of the uniadic (= universal + adic) graph we define below, the desired automorphism is the corresponding adic shift, and the approximation is determined by the tail filtration
of the graph. The proof uses the idea of a paper treating the old
and simpler question about the Borel universality of Rokhlin's lemma.

The corollaries obtained in this paper and in~\cite{VZ18} concern
the theory of uniform approximation of actions of the
group~$\mathbb Z$, in particular, a new method of encoding automorphisms via filtrations.

Let us describe the setting of the problem in more detail. Let $X$ be a standard Borel space. We say that a map $T \colon X \to X$ is a \emph{Borel automorphism} of~$X$ if it is invertible and both $T$ and~$T^{-1}$ are Borel measurable. By $\mapp(X,T)$ we denote the space of all $T$-invariant aperiodic probability measures on~$X$. We say that a Borel subset $\widehat  X \subset X$ is  \emph{metrically universal} if $\mu(\widehat  X )=1$ for every measure $\mu \in \mapp(X,T)$.

The main result of this note is the following theorem.

\begin{theorem}[Borel universality of the uniadic graph]\label{thborel}
Let $T$ be an aperiodic Borel automorphism of a separable
metric space~$X$. Then there exists a metrically universal Borel
subset $\widehat  X \subset X$ and a Borel measurable
injective embedding of $\widehat  X$ into the path space~$\T{\rm(\uag)}$
 of the uniadic graph~$\rm{\uag}$ that sends the automorphism~$T$
to the adic shift on~$\T{\rm(\uag)}$.
\end{theorem}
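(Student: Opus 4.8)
The plan is to realize $T$ adically by building a \emph{basic Borel filtration} out of a sequence of Rokhlin towers and then reading off, for each point, the sequence of combinatorial positions it occupies in these towers. First I would invoke the Borel Rokhlin lemma for aperiodic Borel automorphisms to produce, for each $n$, a Borel base $B_n$ whose iterates $B_n, TB_n, \dots, T^{h_n-1}B_n$ are pairwise disjoint with $h_n \to \infty$ and whose union has measure arbitrarily close to $1$ for \emph{every} measure $\mu \in \mapp(X,T)$ simultaneously. Refining successive towers against one another and intersecting the countably many large unions yields a single metrically universal set $\widehat X$ together with a decreasing sequence of finite Borel partitions $\A_0 \succ \A_1 \succ \cdots$ whose atoms are the tower levels; this is the tail filtration we match to $\T(\uag)$.

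Second, I would verify that this filtration is \emph{combinatorially definite} in the sense developed in our previous paper: each atom of $\A_{n+1}$ is cut by $\A_n$ into an ordered string of subatoms, and the way $T$ permutes these subatoms---the successor-and-carry rule inside each tower---is encoded by the ordered type $\otp$ of the atom. The successor map of $T$ on $\widehat X$ is thereby determined levelwise, exactly as the adic shift is determined by the edge ordering of a Bratteli diagram, and aperiodicity of $T$ forces these ordered strings to lengthen without bound, so that the filtration separates the points of $\widehat X$.

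Third, using the universality of the uniadic graph---that $\uag$ is built so as to contain, at each level, a vertex of every finite ordered type that can occur---I would embed the combinatorial data into the path space. Concretely, the map $\Phi\colon \widehat X \to \T(\uag)$ sends $x$ to the path whose level-$n$ vertex is the $\otp$ of the $\A_n$-atom of $x$ and whose $n$th edge records the position of the $\A_{n-1}$-atom of $x$ inside its $\A_n$-atom. By construction $\Phi$ is Borel; the separation property makes it injective; and the edge ordering of $\uag$ is chosen so that $\Phi \circ T = \si \circ \Phi$, where $\si$ is the adic shift. Hence $T$ is carried to the adic shift on the invariant image $\Phi(\widehat X)$.

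The main obstacle I expect lies in the second step: producing a \emph{coherent} refinement so that the local ordered types stitch together into genuine edges of the \emph{single} graph $\uag$, rather than an ad hoc Bratteli diagram depending on $T$. This requires each level-$(n+1)$ tower to be an honest union of translated copies of level-$n$ towers, up to a remainder that can be absorbed into the exceptional set; it is precisely here that combinatorial definiteness and the universality of $\uag$ must be invoked together. The uniform measure-universality of the Rokhlin unions then guarantees that the accumulated exceptional set is $\mu$-null for every $\mu \in \mapp(X,T)$, so that $\widehat X$ is metrically universal as required.
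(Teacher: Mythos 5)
Your overall architecture (towers $\to$ filtration $\to$ combinatorial coding $\to$ embedding into $\uag$) matches the paper's, but the step you yourself flag as ``the main obstacle'' is exactly where the proposal breaks down, and the tools you propose to fix it cannot do so. Single Glasner--Weiss towers of heights $h_n\to\infty$ covering measure $1-\eps_n$ can never be made coherent: an element of a basic filtration must be an exact orbit segment, and each element of $\xi_{n+1}$ must be an exact union of elements of $\xi_n$; with approximate towers the boundary remainders do not merely accumulate into a small exceptional set, they destroy the nesting, so one does not obtain a filtration at all. Combinatorial definiteness and the universality of $\uag$ are irrelevant to this difficulty: the first controls injectivity of the coding map, the second the graph embedding; neither can restore coherence of towers. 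The paper's way around this is Lemma~\ref{lem1}, a genuinely different Rokhlin-type statement: one fixes a nontrivial \emph{primitive signature} $\{h_1,\dots,h_n\}$ (jointly relatively prime heights, e.g.\ $\{2,3\}$) and produces finitely many full, pairwise disjoint towers of these heights which cover a set of measure \emph{exactly} $1$ for every $\mu\in\mapp(X,T)$, on whose union $T$ is a bijection, and whose bases have total measure $<\tfrac12+\eps$. Relative primality is what permits exact tiling of long orbit segments by blocks of the prescribed heights, and exactness is what makes the iteration in Theorem~\ref{th1} clean: one induces $T$ on the union of the bases and repeats, nesting being automatic because the next towers are built from orbit segments of the induced map; the bound $\tfrac12+\eps$ forces the bases to shrink, so the limiting partition is the orbit partition, and metric universality of $\widehat X$ comes out with no error absorption at all (note that your intersection of sets of measure $1-\eps_n$ only gives measure at least $1-\sum_n\eps_n$ in any case).

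Two further claims are incorrect as stated. First, separation of points does \emph{not} follow from aperiodicity: two distinct points may have identical ordered tree types $\otp_n(\cdot)$ at every level, so your map $\Phi$ need not be injective. The paper has to \emph{color} the filtration (Proposition~\ref{pro2}), using a point-separating sequence of finite Borel partitions of the metric space, to force combinatorial definiteness; this is a property one imposes by adding data, not one that comes for free, and only then does Proposition~\ref{pro1} give an isomorphism with the tail filtration of the graph $\Gamma[\Xi]$. Second, the uniadic graph does not ``contain, at each level, a vertex of every finite ordered type'': it is semi-dyadic, every vertex having exactly one or two incoming edges. Hence even the graph $\Gamma[\Xi]$ arising from the signature $\{2,3\}$, whose vertices have two or three incoming edges, does not embed level-by-level; one needs Proposition~\ref{pro3}, which produces an induced subgraph of $\uag$ whose \emph{telescoping} is isomorphic to $\Gamma[\Xi]$. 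So the final step requires the telescoping mechanism, not the direct vertex-by-vertex matching you describe.
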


The proof of Theorem~\ref{thborel} uses several different ideas.
The first one consists in obtaining a Borel version of
Rokhlin's lemma (see~\cite{GV06}). The second idea is to
iterate a weakened version of Rokhlin's lemma in order to construct a basic Borel filtration and an adic realization of the automorphism (see~\cite{V81,V82} and~\cite{VZ18}). We will prove a weakened Borel version of Rokhlin's lemma (Lemma~\ref{lem1}), iterate it to construct a basic Borel filtration of the automorphism (Theorem~\ref{th1}),
and prove Theorem~\ref{thborel}.

\section{Borel filtrations}
In~\cite{VZ18}, we studied measurable partitions of a Lebesgue space and filtrations (decreasing sequences of measurable partitions\footnote{In a decreasing sequence of partitions, elements of partitions  become coarser. The orderings of partitions adopted in combinatorics and in measure theory are reverse to each other; we use the terminology of measure theory and functional analysis.}) on Lebesgue spaces. In this paper, the main notions are carried over to the Borel case.

\noindent {\bf~2.1. Basic filtrations, colored filtrations}

\begin{definition}
A Borel filtration of a standard Borel space is a decreasing sequence of Borel partitions $\Xi =\{ \xi_n\}_{n \geq 0}$ where $\xi_0$ is the partition into separate points.

A filtration $\Xi$ is said to be \emph{locally finite}\footnote{Note that this notion of local finiteness of a Borel filtration is, in general, different from that adopted in the metric theory of measurable partitions.} if for every~$n$ the sizes of the elements of the partition~$\xi_n$ are uniformly bounded by a constant, possibly depending on~$n$.

We say that $\Xi$ is an \emph{ordered} filtration if each element of the quotient partition $\xi_{n+1}/\xi_n$ is endowed with a measurable linear order (measurability means that the set of all points with given number in the elements of the partition~$\xi_{n+1}/\xi_n$ is Borel measurable); these orders induce a coherent order on the elements of the partitions~$\xi_n$, and hence on the classes of the limiting partition~$\bigcap_n \xi_n$ (which is not, in general, measurable); we assume that the order type is~$\mathbb{Z}$ for almost all classes.

A \emph{basic} filtration is a locally finite ordered filtration.
\end{definition}

\begin{definition}
Let $T$ be a Borel automorphism of a standard Borel space~$X$ and $\Xi = \{\xi_n\}_{n\geq 0}$ be a basic Borel filtration on~$X$. We say that $\Xi$ is a \emph{basic filtration for}~$T$ if the limiting partition $\bigcap_n \xi_n$ is the partition into the orbits of~$T$ and the order of~$\Xi$ is determined by~$T$, i.e.,
\begin{itemize}
\item every element $\alpha$ of the partition~$\xi_k$, $k \in \mathbb{N}$, is a finite orbit of a point $x\in X $ under~$T$:
$$\alpha = \{T^j x\colon j=0,\dots, |\alpha|-1\};$$
\item  for every $x \in X $, the union of all elements~$\xi_k(x)$ of~$\xi_k$ containing~$x$ coincides with the orbit~$\orb_T(x)$ of~$x$ under~$T$.
\end{itemize}
\end{definition}

\begin{definition}\label{defcolor}
Let $\xi$ be a Borel partition of a Borel space~$X$. We say that $\xi$ is a \emph{colored partition} if the quotient partition~$X/\xi$ is endowed with a finite Borel partition~$c[\xi]$ determining the \emph{colors} of the elements of~$\xi$. The partition~$c[\xi]$ will be called the \emph{coloring} of the partition~$\xi$.

A Borel filtration $\Xi = \{\xi_n\}_{n \geq 0}$ is said to be \emph{colored} if each partition~$\xi_n$ is endowed with a coloring~$c[\xi_n]$.
\end{definition}

\subsection{Combinatorial definiteness of a Borel filtration}

As in~\cite{VZ18}, we introduce the notion of combinatorial definiteness of a basic Borel filtration.

Recall the construction of the finite tree describing the structure
of a finite ordered filtration on a finite set. Let $A$
be an arbitrary finite set and $\{\eta_i\}_{i=0}^{n}$ be a
finite ordered filtration on~$A$ with the last partition~$\eta_n$
being trivial (consisting of a single nonempty class).
We construct an ordered graded tree corresponding to this
finite filtration as follows. The vertices of level~$i$ in
this tree correspond to the elements of the partition~$\eta_i$.
A vertex of level~$i+1$ is joined by an edge with a vertex
of level~$i$ if the corresponding elements of partitions are
nested. The $n$th level consists of a single vertex, and the
vertices of level~$0$ are the elements of the set~$A$. The set~$A$
is endowed with a linear order: the filtration order determines
an order on the edges leading from every vertex to vertices of
the previous level. The obtained graded tree will be called
the \textit{filtration tree on the set~$A$} (see Fig.~1). 
The set of all ordered graded finite trees will be denoted by~$\OT$. Besides, we consider trees with a marked vertex (leaf). The set of all ordered graded finite trees with a marked leaf will be denoted by~$\OTP$. In the case of a colored filtration, the coloring can be carried over to trees in a natural way: a
vertex of the tree is colored as the corresponding element of the partition.

\begin{figure}[ht]%
\begin{center}
\includegraphics[width=0.4\textwidth]{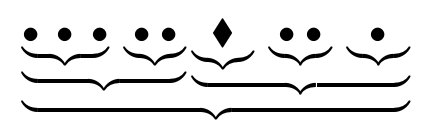}%
\includegraphics[width=0.4\textwidth]{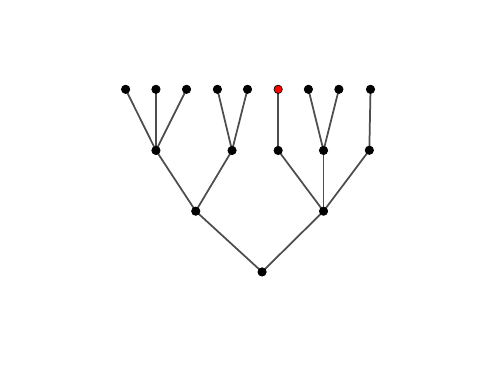}%
\end{center}
\caption{A finite filtration and the corresponding tree with a marked vertex.}%
\label{fig1}%
\end{figure}

Let $\Xi = \{\xi_n\}_{n \geq 0}$ be a (colored) basic Borel
filtration on a space~$X$. For $n \geq 0$ and $x \in X$,
consider the (colored) ordered graded tree $\otp_n(x) \in \OTP$
corresponding to the restriction of the finite filtration
$\{\xi_i\}_{i=0}^n$ to the element of the partition~$\xi_n$
containing the point~$x$, with the marked leaf corresponding
to the point~$x$. By~$\ot_n(x)$ we denote the same (colored)
ordered tree without marked vertex. On the space~$X$ consider
the Borel partition $\bar\xi_n$ into the preimages of points
under the map~$\otp_n$. We say that the sequence~$\bar \Xi$
of thinning partitions $\{\bar\xi_n\}_{n \geq 0}$ is
\textit{associated} with the basic filtration~$\Xi$.

\medskip

\begin{definition}
We say that a (colored) basic Borel filtration~$\Xi$ on the space~$X$ is \emph{combinatorially definite} if for any two points $x,y \in X$ there exists an index~$n$ such that $x$ and~$y$ lie in different elements of the partition~$\bar\xi_n$.
\end{definition}

As in the metric case (see~\cite{VZ18}), every basic Borel filtration of a separable metric space can be colored so as to become combinatorially definite.

\begin{proposition}\label{pro2}
Let $\Xi = \{\xi_n\}_{n \geq 0}$ be a basic Borel filtration of a separable metric space~$X$. Then each partition can be equipped with a color
\textup(see Definition~{\rm\ref{defcolor}\textup)} so that the resulting colored filtration is combinatorially definite.
\end{proposition}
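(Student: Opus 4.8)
The plan is to choose the colorings so strongly that the colored trees $\otp_n(x)$ eventually recover the point $x$ itself; combinatorial definiteness then follows at once. Using separability of $X$, fix a countable family of Borel sets $\{U_k\}_{k\ge1}$ separating points (for instance a countable base of the topology), and put $\phi_k(x)=1$ if $x\in U_k$ and $0$ otherwise, so that the Borel map $\phi=(\phi_k)_{k\ge1}\colon X\to\{0,1\}^{\mathbb N}$ is injective. Two structural facts will be used. First, an ordered graded tree is rigid: the filtration order linearly orders its leaves, each leaf is pinned down by its rank, and the only grading- and order-preserving automorphism is the identity; hence from a tree with a marked leaf one can read off the rank of the marked leaf among all leaves. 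Second, local finiteness provides a bound $B_n$ on the sizes of the elements of $\xi_n$, so a single color of a level-$n$ block may record a finite amount of data, growing with $n$, about all of its points.

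I would then define the coloring $c[\xi_n]$ of an element $\alpha\in\xi_n$ as follows: list the points of $\alpha$ in the filtration order as $a_0,\dots,a_{m-1}$, where $m=|\alpha|\le B_n$, and let the color of $\alpha$ be the number $m$ together with the truncated codes $(\phi_0(a_j),\dots,\phi_{n-1}(a_j))$ for $0\le j<m$. Because $m\le B_n$, only finitely many such tuples occur, so $c[\xi_n]$ is genuinely a finite coloring, and it is Borel since the filtration order, the maps $\phi_k$, and the block sizes are Borel and the blocks are finite. When carried over to the trees, the color of the top (level-$n$) vertex of $\ot_n(x)$ is precisely $c[\xi_n](\xi_n(x))$, and is therefore part of the datum $\otp_n(x)\in\OTP$.

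To finish, fix $x\ne y$. The tree $\otp_n(x)$ exhibits, at its root, the color $c[\xi_n](\xi_n(x))$ and, by rigidity, the rank $r$ of $x$ in the filtration order on $\xi_n(x)$; reading the $r$-th entry of that color returns $(\phi_0(x),\dots,\phi_{n-1}(x))$. Letting $n\to\infty$, the sequence $\{\otp_n(x)\}_{n\ge0}$ thus determines every bit $\phi_k(x)$, hence---by injectivity of $\phi$---the point $x$. In particular $\otp_n(x)\ne\otp_n(y)$ for some $n$, i.e.\ $x$ and $y$ lie in different elements of $\bar\xi_n$, which is exactly combinatorial definiteness.

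The main obstacle is not computational but structural: the colorings are required to be simultaneously finite-valued and constant on the blocks of $\xi_n$, yet they must separate points that are eventually absorbed into a common block. The device above resolves this by packing a finite but lengthening initial segment of the separating bits of all members of a block into one color---which is feasible precisely because local finiteness bounds the block sizes---and then using the marked leaf together with the rigidity of ordered trees to extract from this aggregate the bits of the distinguished point. The remaining verifications (that ``the $j$-th point of a block in the filtration order'' is a Borel function, and the like) are routine once one notes that the blocks are finite and the order is Borel.
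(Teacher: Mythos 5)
Your proof is correct and follows essentially the same route as the paper's (sketched) argument: your truncated codes $(\phi_0,\dots,\phi_{n-1})$ amount to choosing a sequence of finite Borel partitions $\eta_n$ separating points, and coloring a block of $\xi_n$ by the ordered tuple of its points' $\eta_n$-classes is precisely the intended way of ``coloring the elements of $\xi_n$ using $\eta_n$.'' The points you elaborate --- local finiteness guaranteeing finitely many colors, and recovering the marked leaf's rank to extract the bits of $x$ --- are exactly the details the paper delegates to~\cite{VZ18}.
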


The proof reproduces that from~\cite{VZ18}, so we only outline its scheme: it suffices to choose an arbitrary sequence $\{\eta_n\}_{n\geq 0}$ of finite Borel partitions of~$X$ that separates the points, and for each $n \geq 0$ define a coloring of the elements of the partition~$\xi_n$ using the partition~$\eta_n$.

In the next subsection, we will show how, given a combinatorially
definite colored filtration, one can construct its adic model.

\subsection{An adic realization of a combinatorially definite
filtration}

The construction suggested in~\cite{VZ18} allows one to realize combinatorially definite (colored) basic filtrations as tail filtrations on graded graphs endowed with an adic structure. It can be carried over to the Borel case without changes. Let us briefly recall this construction. Given a (colored) basic Borel filtration $\Xi=\{\xi_n\}_{n\geq 0}$ of a space~$X$, we construct a graded graph $\Gamma=\Gamma[\Xi]$ as follows. Its $n$th level contains the vertices corresponding to different (colored) trees $\ot_n(x)$, $x \in X$; there are finitely many of them, since the filtration is locally finite (and the set of colors is finite for every~$n$). Two vertices of neighboring levels are joined by an edge if the corresponding (colored) ordered trees are nested. An order on the edges entering every vertex is determined by the order in the tree corresponding to this vertex. If a vertex~$v$ of level~$n$ in~$\Gamma$  corresponds to a tree $\ot \in \OT$, then the paths coming to~$v$ from the vertex of level~$0$ correspond in a natural way to the leaves in the tree~$\ot$ (taking into account the order). The space~$X$ can be mapped to the path space~$\T(\Gamma)$ of the constructed graph: a point $x \in X$ goes to the path passing through the vertices corresponding to the trees $\ot_n(x)$, $n \geq 0$; the beginning of length~$n$ of this path corresponds to the marked leaf in the tree~$\otp_n(x)$.

As in the metric case, we obtain the following result.

\begin{proposition}\label{pro1}
If a \textup(colored\textup) basic Borel filtration~$\Xi$ of a space~$X$ is combinatorially definite\textup, then it is isomorphic to the tail filtration of the constructed graded graph~$\Gamma[\Xi]$ with the adic order.
\end{proposition}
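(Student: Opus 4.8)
The plan is to construct an explicit isomorphism between the filtration $\Xi$ on $X$ and the tail filtration on $\T(\Gamma[\Xi])$, and to verify that it carries partitions to partitions in both directions. First I would define the map $\Phi \colon X \to \T(\Gamma[\Xi])$ exactly as described in the paragraph preceding the statement: a point $x$ is sent to the path whose vertex at level $n$ is the one corresponding to the tree $\ot_n(x)$, with the length-$n$ beginning of the path determined by the marked leaf of $\otp_n(x)$. The key observation making this well defined is that nested trees $\ot_n(x) \subset \ot_{n+1}(x)$ correspond to vertices joined by an edge in $\Gamma[\Xi]$, and the adic order on the edges entering a vertex was defined precisely to match the order in the tree, so the sequence of vertices together with the marked leaves really does assemble into a single admissible ordered path.

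Next I would check that $\Phi$ is a Borel isomorphism onto its image, and here \emph{combinatorial definiteness} is used directly: two points $x, y$ lie on the same path through level $n$ precisely when $\otp_n(x) = \otp_n(y)$, i.e. when $x$ and $y$ belong to the same element of the thinning partition $\bar\xi_n$; definiteness says that for $x \neq y$ some $\bar\xi_n$ separates them, which forces their images to diverge at some level, so $\Phi$ is injective. Borel measurability of $\Phi$ follows from the fact that each map $x \mapsto \otp_n(x)$ is Borel (the partition $\bar\xi_n$ is Borel), and each coordinate of the path is a Borel function of $x$. The measurability of the inverse on the image can be obtained by the standard fact that a Borel-measurable injection between standard Borel spaces has Borel image and Borel inverse, or by exhibiting the inverse coordinatewise as the selection of the appropriate $\bar\xi_n$-class.

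Then I would match the filtrations themselves. The tail filtration on $\T(\Gamma[\Xi])$ at level $n$ identifies two paths when they agree from level $n$ onward; under $\Phi$ I would argue that this pulls back exactly to $\xi_n$. In one direction, if $x$ and $y$ lie in the same element of $\xi_n$, then they determine the same tree $\ot_m$ and the same nesting data at all levels $m \geq n$, so their images agree beyond level $n$; conversely, sharing a common path tail beyond level $n$ forces $x, y$ into the same $\xi_n$-class because the element of $\xi_n$ is recoverable from the vertex at level $n$ together with the marked leaf. Since both maps respect the grading and the order on leaves reproduces the adic order on incoming paths, the order structure of $\Xi$ is carried to the adic order, completing the isomorphism of ordered (colored) filtrations.

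The main obstacle I expect is the bookkeeping of the orders and the marked leaves, not any deep difficulty: one must verify carefully that the correspondence between leaves of $\ot_n$ and paths of length $n$ arriving at the associated vertex is order-preserving and compatible across consecutive levels, so that a coherent adic path is produced rather than merely a sequence of unordered vertices. This is exactly the point where the definition of $\Gamma[\Xi]$ (the order on entering edges coming from the tree order) has to be invoked, and where the colored case requires checking that the coloring of vertices agrees with the coloring of the corresponding partition elements. Because the statement asserts that the argument ``can be carried over to the Borel case without changes'' from~\cite{VZ18}, I would lean on the metric construction for the combinatorial core and only supply the Borel-measurability verifications that replace the measure-theoretic ones.
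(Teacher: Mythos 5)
Your construction and overall plan coincide with the paper's route: the paper offers no separate proof of this proposition beyond the construction of $\Gamma[\Xi]$ and of the map $x\mapsto(\ot_n(x))_{n\ge 0}$ described just before the statement, deferring the verification to the metric case of~\cite{VZ18}; your well-definedness, injectivity, and Borel-measurability steps supply exactly that verification, and your use of combinatorial definiteness to get injectivity is correct.

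There is, however, one genuinely flawed step: the converse half of the filtration matching. You assert that two paths sharing a tail from level $n$ on lie over the same element of $\xi_n$ ``because the element of $\xi_n$ is recoverable from the vertex at level $n$ together with the marked leaf.'' That data is precisely $\otp_n(x)$, i.e., the element of the associated partition $\bar\xi_n$; if it determined $\xi_n(x)$, then every basic filtration would automatically be combinatorially definite (the marked leaf already pins down the point within its $\xi_n$-element), and Proposition~\ref{pro2} together with the whole coloring apparatus would be superfluous. In fact, distinct elements of $\xi_n$ whose restricted (colored) filtrations are isomorphic correspond to the \emph{same} vertex of $\Gamma[\Xi]$; moreover, two tail-equivalent paths need not have the same marked leaf, so the claim would not close the argument even if it were true. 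The repair is short and uses only what you already have: if $\Phi(x)$ and $\Phi(y)$ agree from level $n$ on, let $z\in\xi_n(x)$ be the point occupying the leaf of $\ot_n(x)=\ot_n(y)$ singled out by the initial segment of $\Phi(y)$; since $\xi_m(z)=\xi_m(x)$ for all $m\ge n$, the path $\Phi(z)$ has the same tail as $\Phi(x)$ and the same beginning as $\Phi(y)$, whence $\Phi(z)=\Phi(y)$, and injectivity (your combinatorial-definiteness step) gives $z=y$, so $y\in\xi_n(x)$. With this substitution your proof is complete and follows the same approach as the paper.
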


Proposition~\ref{pro1} reduces the problem of finding an adic
realization of a Borel automorphism to the problem of constructing
a combinatorially definite colored basic Borel filtration for this automorphism.

\section{A Borel version of Rokhlin's lemma; constructing a basic
filtration for a Borel automorphism}

Let $T$ be a Borel automorphism of a standard Borel space~$X$. Let $B \subset X$ be a Borel subset. By~$\orb_T(B)$ we denote the orbit of the set~$B$ under~$T$, and by~$\orb_{T,+}(B)$, the positive semi-orbit (sometimes, the symbol~$T$ in the notation for the orbit will be omitted):
$$
\orb_T(B) = \bigcup_{k \in \mathbb{Z}} T^k B, \qquad \orb_{T,+}(B) = \bigcup_{k\geq 0} T^k B.
$$
The \emph{Rokhlin tower} $\tow[B]$ with base~$B$ is the sequence of pairwise disjoint Borel sets $B_0=B$, $B_k = T B_{k-1}\setminus B_0$, $k \geq 1$. The sets~$B_k$ are called the levels of the tower~$\tow[B]$, and $B_0$ is called the base of~$\tow[B]$. Clearly, the union of all levels of the tower~$\tow[B]$ is the positive semi-orbit of~$B$ under~$T$:
$$
\cup_{k \geq 0} B_k = \orbp(B).
$$
Let $h \in \mathbb{N}$. The \emph{tower~$\tow_h[B]$ of height~$h$ with base~$B$} is the part of the Rokhlin tower~$\tow[B]$ defined above consisting of the levels $B_0, \dots, B_{h-1}$. We say that a tower is \emph{full} if every  its level  is the full image of the base under the corresponding power of~$T$, i.e.,  $B_k = T^k B_0$.  By $\bar\tow_h[B]$ we denote the union of all levels of the tower~$\tow_h[B]$. Saying that towers $\tow_{h_1}[B_1]$ and $\tow_{h_2}[B_2]$ are disjoint, we mean that the corresponding sets   $\bar\tow_{h_1}[B_1]$ and $\bar\tow_{h_2}[B_2]$ are disjoint.

\subsection{A weakened Borel version of Rokhlin's lemma}

The classical Rokhlin's lemma underlies the theory of uniform approximation of automorphisms of a Lebesgue space. The problem of finding a Borel version of the lemma was posed by V.~A.~Rokhlin in a conversation with the first author. Glasner and Weiss (see~\cite[Proposition~7.9]{GV06}) proved the following Borel analog of the lemma.

\begin{lemma}\label{lemGV}
Let $T$ be a homeomorphism of a Polish space~$(X, \rho)$. Let $\eps>0$\textup, $n \in \mathbb{N}$. Then there exists a Borel subset
 $B\subset X$ such that the tower $\tow_n[B]$
is full and $\mu(\bar\tow_h[B])>1-\eps$ for every measure~$\mu \in
\mapp(X,T)$.
\end{lemma}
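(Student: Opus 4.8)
The plan is to derive the lemma from a Borel marker construction, a tiling of orbit segments by full height-$n$ towers, and a uniform measure estimate obtained from the Kakutani tower formula. First I would discard the periodic part: the set $X_{\mathrm{ap}}$ of points with infinite $T$-orbit is Borel and $T$-invariant, and $\mu(X_{\mathrm{ap}})=1$ for every $\mu\in\mapp(X,T)$, so it suffices to work on $X_{\mathrm{ap}}$, where $T$ acts aperiodically. Fix an integer $N>n/\eps$.

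The central step, and the one I expect to be the main obstacle, is a Borel marker lemma: using a countable basis $\{U_i\}$ of the Polish topology I would construct, by a greedy inductive selection over~$i$, a Borel set $M\subset X_{\mathrm{ap}}$ that is a \emph{maximal $N$-discrete complete section} --- that is, any two distinct points of~$M$ on the same orbit are at orbit-distance at least~$N$, and $M$ admits no proper enlargement with this property. The $N$-discreteness forces the first-return time $r_M\colon M\to\mathbb{N}$ to satisfy $r_M\ge N$, while maximality forces every orbit to meet~$M$ (otherwise a point of that orbit could be added), so that $M$ is a complete section and, by recurrence, $r_M$ is finite almost everywhere for each aperiodic~$\mu$. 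The delicate part here is verifying that the greedily defined~$M$ is genuinely Borel, which amounts to checking that at each stage the condition ``$x$ may be added without violating $N$-discreteness'' is Borel in~$x$; this bookkeeping is what I expect to occupy most of the argument.

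Given~$M$, I would tile each orbit block by full height-$n$ towers. For $m\in M$ with $r=r_M(m)\ge N$, set $k=\lfloor r/n\rfloor\ge 1$ and place tower bases at $m,T^nm,\dots,T^{(k-1)n}m$; let $B$ be the union over $m\in M$ of these base points. Then $B$ is Borel, being defined by Borel operations on~$M$ and the Borel function~$r_M$. Within each block the sets $T^jB$, $0\le j<n$, occupy positions $0,\dots,kn-1<r$ and hence remain inside the block without wrapping into the next one; consequently they are pairwise disjoint and $B_k=T^kB_0$ for $0\le k<n$, so the tower $\tow_n[B]$ is full. The uncovered points form the Borel set $U=X_{\mathrm{ap}}\setminus\bar\tow_n[B]$, which in each block consists of the top $r\bmod n<n$ positions.

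Finally I would establish the uniform measure bound. For any $\mu\in\mapp(X,T)$ the Kakutani tower representation of~$\mu$ over the complete section~$M$ yields a measure~$\nu$ on~$M$ with $\int_M r_M\,d\nu=\mu(X)=1$ and $\mu(U)=\int_M (r_M\bmod n)\,d\nu$. Since $r_M\ge N$ gives $r_M\bmod n<n\le (n/N)\,r_M$ pointwise, I obtain
\[
\mu(U)=\int_M (r_M\bmod n)\,d\nu\le \frac{n}{N}\int_M r_M\,d\nu=\frac{n}{N}<\eps ,
\]
a bound independent of~$\mu$, so that $\mu(\bar\tow_n[B])=1-\mu(U)>1-\eps$ for every $\mu\in\mapp(X,T)$, as required.
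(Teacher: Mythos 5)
First, a point of comparison: the paper does not prove this lemma at all --- it is quoted from Glasner and Weiss \cite[Proposition~7.9]{GV06}, and the paper's own work starts with the weakened version, Lemma~\ref{lem1}. So your proposal cannot be measured against an internal proof of this statement; the fair comparison is with the proof of Lemma~\ref{lem1}, which faces exactly the same difficulties. Your overall architecture --- restrict to the aperiodic part $X_{\mathrm{ap}}$, build a Borel maximal $N$-discrete complete section $M$, tile each return block by full towers of height $n$, and extract the uniform bound $\mu(U)=\int_M (r_M \bmod n)\,d\nu \le n/N<\eps$ from the Kac/Kakutani formula --- is a correct and standard route to the statement; the tiling step and the measure estimate are sound as written (in fact maximality gives a uniform bound $r_M\le 2N$, so the appeal to recurrence is not even needed).

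However, the marker step as you describe it has a genuine defect, and you have misdiagnosed where the difficulty lies. The problem is not the Borelness of the condition ``$x$ may be added without violating $N$-discreteness'' --- that condition (namely $T^jx\notin M_{i-1}$ for $0<|j|<N$) is plainly Borel. The problem is that at stage $i$ you must add \emph{all} admissible points of $U_i$ simultaneously: two points $x,y\in U_i$, each individually admissible relative to $M_{i-1}$, may lie on the same orbit at distance less than $N$ from \emph{each other}, and then $M_i$ is no longer $N$-discrete; nor can you add points one at a time, since there are uncountably many. The missing idea is to refine the countable cover so that each piece is itself $N$-discrete, i.e. $U_i\cap T^jU_i=\emptyset$ for $0<|j|\le N$; with such a cover the greedy union is automatically $N$-discrete at every stage and your argument goes through. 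This refinement is available precisely because $T$ is a homeomorphism and aperiodic points have distinct iterates, and it is exactly the device the paper uses in proving Lemma~\ref{lem1}: there one passes to the auxiliary metric $\widetilde\rho(x,y)=\sum_{i\ge 0}2^{-i}\rho(T^ix,T^iy)$, in which $T$ is Lipschitz, and covers $X_N$ by balls $B_{\widetilde\rho}(x,\delta)$ whose translates $T^jB_{\widetilde\rho}(x,\delta)$, $0\le j\le N$, are pairwise disjoint, before running the same kind of greedy disjointification you propose. So: right strategy, correct estimates, but the one step you waved off as ``bookkeeping'' is the actual mathematical content, and as literally stated it would fail.
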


It will be convenient for us to modify this statement so as to make
it more suitable for iteration and construction of a basic filtration.

\begin{definition}
A \emph{signature} is a finite nonempty subset of the set of positive integers: $\si = \{h_1,\dots,h_n\}\subset \mathbb{N}$. We say that a signature $\si$ is \emph{primitive} if the numbers from~$\si$ are jointly relatively prime. The signature $\si=\{1\}$ will be called \emph{trivial}.

Let $\xi$ be a partition of a space~$X$. We say that $\xi$ \emph{is subordinate to a signature~$\si$} if every element of~$\xi$ is finite and its cardinality is contained in~$\si$.
\end{definition}

\begin{lemma}\label{lem1}
Let $T$ be a Borel automorphism of a separable metric space~$(X,\rho)$. Let $\si=\{h_1,\dots,h_n\}$ be a nontrivial primitive signature. Then for every $\eps>0$ there exist Borel subsets $B_1, \dots, B_n \subset X$ such that the following properties hold\textup:
\begin{enumerate}
\item[\rm (1)] the towers $\tow_{h_i}[B_i]$\textup, $i=1,\dots,n$\textup, are full and pairwise disjoint\textup;
\item[\rm (2)] $T$ is a bijection on $\bigcup\limits_{i=1}^n\bar\tow_{h_i}[B_i]$\textup;
\item[\rm (3)] for every measure $\mu \in \mapp(X,T)$\textup,
$$
\sum_{i=1}^n \mu\Big(\bar\tow_{h_i}[B_i]\Big)=1, \qquad \sum_{i=1}^n \mu(B_i)<\frac12+\eps.
$$
\end{enumerate}

\end{lemma}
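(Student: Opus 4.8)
The plan is to deduce the lemma from the Glasner--Weiss version (Lemma~\ref{lemGV}) by using the base of a single tall full tower as a system of markers, and then tiling each orbit by consecutive blocks whose lengths belong to~$\si$. Since $X$ is a separable metric space, its Borel structure is standard, and I would first refine the topology to a Polish one with the same Borel sets in which $T$ becomes a homeomorphism; as the notions of invariant measure, tower, and the sets~$B_i$ are Borel, this reduction does not affect the statement and lets us apply Lemma~\ref{lemGV}. Fix a large height~$N$ (to be specified) and obtain a Borel set~$B$ with $\tow_N[B]$ full and $\mu(\bar\tow_N[B])>1-\eps$ for every $\mu\in\mapp(X,T)$; put $M=B$. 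Because the tower is full, a column $x,Tx,\dots,T^{N-1}x$ lies on the disjoint levels $B_0,\dots,B_{N-1}$, so for $x\in M$ the next return to~$M$ occurs after at least $N$ steps. Hence consecutive visits of any orbit to~$M$ are separated by gaps of length~$\ge N$.

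The arithmetic input is precisely the primitivity hypothesis: since $\gcd(h_1,\dots,h_n)=1$, the numerical semigroup generated by~$\si$ contains every integer exceeding the Frobenius number~$F$, so any gap of length $\ge N>F$ can be partitioned into consecutive segments with lengths in~$\si$. I would fix once and for all a deterministic rule assigning to each integer $\ell>F$ a tiling by parts from~$\si$ that uses as many parts of maximal length $\max\si$ as possible, and apply it inside every inter-marker gap of every orbit; this partitions the whole orbit into full $\si$-blocks in a Borel way (whether a point is the bottom of an $h_i$-block depends only on the Borel marker pattern around it). Let $B_i$ be the Borel set of bottoms of blocks of length~$h_i$. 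Then the towers $\tow_{h_i}[B_i]$ are full and pairwise disjoint, and their union $U=\bigcup_i\bar\tow_{h_i}[B_i]$ is $T$-invariant, since $T$ carries the top level of each block to the bottom of the next block; thus $T$ restricts to a bijection of~$U$, giving~(1) and~(2). Disjointness also gives $\sum_i\mu(\bar\tow_{h_i}[B_i])=\mu(U)$.

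For~(3) it remains to check $\mu(U)=1$ and the base estimate. The first identity is where I expect the main obstacle, namely securing full measure simultaneously for \emph{all} aperiodic~$\mu$, including non-ergodic ones. The point is that Lemma~\ref{lemGV} controls every aperiodic invariant measure, hence every ergodic component~$\nu$, for which $\nu(M)\ge(1-\eps)/N>0$; by ergodicity and Poincar\'e recurrence, $\nu$-a.e.\ orbit meets~$M$ bi-infinitely and is therefore tiled completely, so $\nu(U)=1$, and integrating over the ergodic decomposition yields $\mu(U)=1$ for every~$\mu$. This is exactly why no ergodic component can be ``missed'' by the markers.

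Finally, the base estimate follows from a density count along orbits. Within a gap of length~$\ell\ge N$ the chosen tiling uses at most $\ell/\max\si+C$ blocks for a constant~$C$ bounded in terms of~$F$ and~$\si$, and here $\max\si\ge2$ by nontriviality of the signature. Summing over gaps, the density of block-bottoms along each orbit is at most $1/\max\si+C/N\le\tfrac12+C/N$, so that $\sum_i\mu(B_i)\le\tfrac12+C/N$; choosing $N$ large enough that $C/N<\eps$ (and $N>F$) completes the proof. Both hypotheses are thus used essentially: primitivity guarantees tileability of all long gaps, and nontriviality ($\max\si\ge2$) keeps the density of bases below~$\tfrac12$.
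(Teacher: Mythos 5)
Your strategy --- extracting a marker set from the Glasner--Weiss lemma and tiling the inter-marker gaps via the numerical semigroup generated by $\si$ --- is genuinely different from the paper's proof, which never invokes Lemma~\ref{lemGV} at all: it covers the $N$-aperiodic part by balls of an auxiliary metric $\widetilde\rho(x,y)=\sum_i 2^{-i}\rho(T^ix,T^iy)$, disjointifies their orbits, decomposes the resulting elementary towers by hand, and builds the $\eps$-control on the $h_1$-towers into the arithmetic choice of $N$. Your tiling rule and density count are sound, but two of your steps fail as written. First, the opening reduction is wrong: a separable metric space need \emph{not} have standard Borel structure (any non-Borel subspace of $[0,1]$ with the induced metric is a counterexample), so in general there is no Polish topology compatible with the Borel sets, and Lemma~\ref{lemGV}, which requires a homeomorphism of a Polish space, cannot be applied; the ergodic decomposition you use later needs standardness as well. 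This is not a pedantic point: the lemma is stated, and is later used in Theorem~\ref{th1} (where it is applied to Borel subsets $X_m$ of the original space), for arbitrary separable metric spaces, and avoiding the Polish hypothesis is precisely why the paper proves the lemma from scratch using only separability of $\rho$ rather than quoting Glasner--Weiss. As it stands, your argument establishes the lemma only for standard Borel $X$.

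Second, property (2) fails for the construction you describe. You tile only the \emph{finite} inter-marker gaps, but an orbit can have a last (or first) visit to $M$; then $T$ sends the top of the final block of a finite gap to a marker that opens an infinite, untiled gap, i.e., to a point outside $U=\bigcup_i\bar\tow_{h_i}[B_i]$. Hence the claim that ``$T$ carries the top level of each block to the bottom of the next block'' is false in general, and $T$ need not map $U$ bijectively onto itself. The repair is one you already half-possess: before tiling, pass to the Borel set of points whose orbit meets $M$ infinitely often in both directions; on that set every gap is finite, $U$ becomes $T$-invariant, and your ergodic-component argument (each aperiodic ergodic component $\nu$ has $\nu(M)\ge(1-\eps)/N>0$, hence is bi-recurrent a.e.) shows this set retains full measure for every $\mu\in\mapp(X,T)$. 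Note that the paper confronts exactly the same difficulty and resolves it with the final step $B=\bigcap_{k\ge 0}T^{-k}A$. With these two repairs --- restricting to the bi-recurrent set, and either weakening the statement to standard Borel spaces or replacing the appeal to Lemma~\ref{lemGV} by a direct covering argument --- your proof goes through.
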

\begin{proof}
We may assume without loss of generality that
$1 \leq h_1<\dots<h_n$. Take a positive integer~$N$ such that every positive integer   $m\geq N$ can be represented as a sum $m=\sum\limits_{i=1}^n\alpha_i h_i$ with nonnegative integer coefficients~$\alpha_i$ with the additional constraint $\alpha_1h_1< \eps m$. Consider the set
$$
X_N=\{x\in X \colon T^k x \ne x \quad \text{for}\quad 1\leq k\leq N \}.
$$
We want to find a sequence of Borel sets $\{U_j\}_{j \in \mathbb{N}}$ such that for every  $j$ the tower~$\tow_N[U_j]$ is full and $X_N = \cup_{j} U_j$.

We may assume without loss of generality that the metric~$\rho$ is bounded. Consider a new metric on~$X$:
$$
\widetilde    \rho(x,y) = \sum_{i=0}^\infty 2^{-i}\rho(T^i x,T^i y), \quad x,y \in X.
$$
This series is absolutely convergent, and the metric~$\widetilde   \rho$  is separable on~$X$. It is easy to see that the map~$T$ is Lipschitz in this metric:
$\widetilde   \rho(Tx,Ty)\leq 2 \widetilde   \rho(x,y)$. For every point $x \in X_N$, the points $T^i x$, $i=0,\dots, N$, are pairwise distinct, hence there exists $\delta>0$ such that the open balls $B_{\widetilde   \rho}(T^ix, 2^i \delta)$, $i=0,\dots, N$, are pairwise disjoint. Then the shifts of the open ball $B_{\widetilde   \rho}(x, \delta)$ under the transformations $T^i$, $i=0,\dots,N$, are pairwise disjoint. Since the space~$(X,\widetilde   \rho)$ is separable, we can represent the set~$X_N$ as a countable union of such balls. Let us call them $U_j$, $j \in \mathbb{N}$. It remains to show that every such open ball in the metric~$\widetilde    \rho$ is measurable with respect to the Borel $\sigma$-algebra generated by the metric~$\rho$. For  fixed $x$, for every~$i$, the function $y \mapsto \rho(T^i x,T^i y)$ is Borel measurable as the composition of Borel measurable maps. Therefore, the function $y\mapsto \widetilde   \rho(x,y)$ is also Borel measurable, which implies the measurability of the ball in the metric~$\widetilde    \rho$. So, a desired family~$U_j$ is constructed.

Now let us construct a sequence of Borel sets~$\{V_j\}$ such that the semi-orbits~$\orbp(V_j)$ are pairwise disjoint and
$$
\bigcup_j \orb(V_j) =\bigcup_j \orb(U_j) = X_N.
$$
This sequence can be defined recursively: $V_1= U_1$, and
$$
V_j = U_j \setminus \Big(\bigcup_{l=1}^{j-1} \orb(V_l)\Big),  \quad j \geq 2.
$$

For fixed $j \in \mathbb{N}$, let $V_{j,k}$ be the $k$th  level  in the tower~$\tow[V_j]$. By construction, $V_j \subset U_j$, hence the tower~$\tow_N[V_j]$ is full. Each of the sets~$V_j$ can be represented as a countable union:
\begin{align*}
V_j = \widetilde    V_{j,\infty} \cup \bigcup_{k \geq N} \widetilde    V_{j,k},& \quad  \text{ where } \widetilde    V_{j,k} = T^{-k}V_{j,k} \setminus T^{-k-1}V_{j,k+1}, \notag
\\
\widetilde    V_{j,\infty} &= \bigcap_{k \geq N} T^{-k}V_{j,k}.
\end{align*}
Thus, $\widetilde    V_{j,k}$ is the set of points of the tower base over which there are exactly $k$ levels, and $\widetilde    V_{j,\infty}$ is the set of points over which the tower is infinite; hence, the towers $\tow_{k}[\widetilde    V_{j,k}]$ and $\tow[\widetilde    V_{j,\infty}]$ are full. We will call them elementary towers. Every elementary tower has height at least~$N$, hence it can be represented as a disjoint union of full towers of height $h_1,\dots,h_n$; moreover, a representation can be chosen in such a way that the total portion of levels of  towers of height~$h_1$ does not exceed~$\eps$ (and when decomposing elementary towers of infinite height, one can do without towers of height~$h_1$ at all). Combining the towers of the same height~$h_i$,  $i =1,\dots, n$, into one, denote its base by~$A_i$.

Clearly, all sets involved in the construction are Borel, and the towers $\tow_{h_i}[A_i]$, $i =1,\dots, n$, are pairwise disjoint. For a measure $\mu \in \mapp(X,T)$, we have $\mu(X_N)=1$. Note that the union of the constructed towers coincides with the union of the semi-orbits~$\orbp(V_j)$. Obviously, for every~$k \geq 1$ and every~$j$, the set $T^{-k}V_j \setminus \orbp(V_j)$ has pairwise disjoint images under the transformations $T^{lk}$, $l \geq 1$, hence it has zero measure. It follows that
$$
\mu\Big(\bigcup_{i=1}^n \bar\tow_{h_i}[A_i]\Big)=
\mu\Big(\bigcup \orbp(V_j)\Big)=1.$$
It is clear from construction that for every finite elementary tower $\tow_{k}[\widetilde    V_{j,k}]$,
$$
\mu\Big(\bar\tow_{h_1}[A_1] \cap \bar\tow_{k}[\widetilde    V_{j,k}]\Big)\leq \eps \mu\Big(\bar\tow_{k}[\widetilde    V_{j,k}]\Big),
$$
hence $\mu\big(\bar\tow_{h_1}[A_1]\big)\leq\eps$. Since the signature~$\si$ is not trivial, $h_2 \geq 2$. Since all towers $\tow_{h_i}[A_i],$ $i=1,\dots, n$, are full, this implies the inequality
$$
\sum_{i=1}^n \mu(A_i) = \mu(A_1) + \sum_{i=2}^n \mu(A_i) \leq \mu(A_1) + \sum_{i=2}^n \frac12 \mu\big(\bar\tow_{h_i}[A_i]\big)\leq \frac{1+\eps}{2}< \frac12 +\eps.
$$

Let $A = \bigcup\limits_{i=1}^n \bar\tow_{h_i}[A_i]$ be the union of all constructed towers. It is clear from construction that $T(A)\subset A$. Put $B=\bigcap\limits_{k\geq 0} T^{-k} A$ and $B_i=A_i \cap B$, $i=1,\dots, n$. Obviously, the sets~$B_i$ inherit the properties of the sets~$A_i$ verified earlier, but now the map~$T$ is a bijection from the set  $\bigcup\limits_{i=1}^n \bar\tow_{h_i}[B_i]$ onto itself.
\end{proof}

\noindent{\bf~3.2. Constructing a basic filtration with a
given signature}

\begin{theorem}\label{th1}
Let $\{\si_k\}_{k \in \mathbb{N}}$ be a sequence of nontrivial
primitive signatures. Let  $T$ be a Borel automorphism of a
separable metric space $(X,\rho)$. Then there is a
metrically universal  $T$\nobreakdash-in\-va\-riant subset $\widehat  X\subset X$ consisting of aperiodic points and a basic Borel filtration $\Xi=\{\xi_k\}_{k \geq 0}$ on the set~$\widehat  X$ such that the partition $\xi_{k}/\xi_{k-1}$ is subordinate to the signature~$\si_{k}$ for all~$k \geq 1$.
\end{theorem}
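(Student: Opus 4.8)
The plan is to iterate Lemma~\ref{lem1} along an induced-tower construction, building the filtration one level at a time. At the first step I would apply Lemma~\ref{lem1} to $(X,T)$ with the signature $\si_1$ and some $\eps_1$, obtaining bases $B^{(1)}_1,\dots,B^{(1)}_n$ and a $T$-invariant Borel set $A^{(1)}=\bigcup_i\bar\tow_{h_i}[B^{(1)}_i]$ on which, by property~(2), $T$ is a bijection, and which, by property~(3), is full for every $\mu\in\mapp(X,T)$. The columns of these full towers (the orbit segments $\{b,Tb,\dots,T^{h_i-1}b\}$ with $b\in B^{(1)}_i$, of length $h_i\in\si_1$) are declared to be the elements of $\xi_1$, linearly ordered inside by powers of~$T$; together with $\xi_0$ (the partition into points) this makes $\xi_1/\xi_0$ subordinate to $\si_1$.

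For the inductive step I would pass to the first-return map. Let $\widehat B^{(k-1)}=\bigcup_i B^{(k-1)}_i$ be the union of the bases produced at step $k-1$. Since the towers over $\widehat B^{(k-1)}$ are full, pairwise disjoint, and their union is $T$-invariant, the return time to $\widehat B^{(k-1)}$ is constant on each base (equal to the column height), so the induced map $S^{(k-1)}:=T_{\widehat B^{(k-1)}}$ is a Borel automorphism of the separable metric space $\widehat B^{(k-1)}$; moreover every $\mu\in\mapp(X,T)$ induces an aperiodic invariant probability measure $\nu_{k-1}=\mu|_{\widehat B^{(k-1)}}/\mu(\widehat B^{(k-1)})\in\mapp(\widehat B^{(k-1)},S^{(k-1)})$. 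Applying Lemma~\ref{lem1} to $(\widehat B^{(k-1)},S^{(k-1)})$ with the signature $\si_k$ and some $\eps_k$, I obtain new bases whose $S^{(k-1)}$-columns, of heights $m\in\si_k$, correspond in $X$ to unions of $m$ consecutive $\xi_{k-1}$-blocks along a single $T$-orbit; these unions are taken to be the elements of $\xi_k$. By construction $\xi_k\supset\xi_{k-1}$ and $\xi_k/\xi_{k-1}$ is subordinate to $\si_k$, the order of $\xi_k$ is again the one induced by $T$ (with Borel position functions, so the orders are measurable), and local finiteness follows from the uniform size bound $|\xi_k(x)|\le\prod_{j\le k}\max(\si_j)$.

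It remains to glue the steps into a single basic filtration on a metrically universal set. Each application discards only a Borel set that is null for every $\mu\in\mapp(X,T)$, so intersecting the resulting nested $T$-invariant full-measure sets with the aperiodic part $\{x:T^kx\ne x\ \forall k\ne0\}$ yields a metrically universal, $T$-invariant $\widehat X$ consisting of aperiodic points; as each $\xi_k$-block is a single orbit segment, the partitions restrict cleanly to $\widehat X$. The point requiring care — which I regard as the crux — is that the limiting partition must be the orbit partition with order type $\mathbb Z$ for almost every orbit, i.e.\ the blocks must grow in \emph{both} directions. For this I would estimate the base measures: telescoping property~(3) gives the bound $\mu(\widehat B^{(k)})<\prod_{j=1}^{k}(\tfrac12+\eps_j)$, which is independent of $\mu$, so choosing $\eps_j\le\tfrac14$ makes $\sum_k\mu(\widehat B^{(k)})$ finite; since each $\xi_k$ is a union of full $T$-towers, the top level of each tower has the same measure as its base, so $\sum_k\mu(\mathrm{Top}^{(k)})$ is finite as well. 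By the Borel–Cantelli lemma almost every point lies in only finitely many bases and finitely many top levels, whence the distances from a point to the bottom and to the top of its $\xi_k$-block both tend to infinity and $\bigcup_k\xi_k(x)=\orb_T(x)$ with order type $\mathbb Z$. Because the estimate is uniform in $\mu$, the exceptional set is simultaneously null for all $\mu\in\mapp(X,T)$ and may be removed from $\widehat X$, after which $\Xi=\{\xi_k\}_{k\ge0}$ is a basic Borel filtration with $\xi_k/\xi_{k-1}$ subordinate to $\si_k$ for every $k\ge1$.
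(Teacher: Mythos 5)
Your proposal is correct and follows essentially the same route as the paper: iterate Lemma~\ref{lem1} along first-return maps on the unions of tower bases (the paper's $X_{m+1}$, $T_{m+1}$), take the tower columns as the elements of $\xi_k$, and use the decay $\mu(X_{m+1})<(\tfrac12+\eps_m)\mu(X_m)$ both for metric universality and for two-sided growth of the blocks. The only cosmetic difference is at the end: where you invoke Borel--Cantelli (hence need summable $\eps_j$) and then delete an exceptional null set, the paper excises $\orb_T\big(\bigcap_m X_m\big)$ from the outset, so that backward growth of blocks (and, applied to forward iterates, order type $\mathbb Z$) holds pointwise on $\widehat X$ using only $\mu(X_m)\to 0$.
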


\begin{proof}
The proof is based on an iterative application of Lemma~\ref{lem1}.
Fix a sequence $\eps_n$, $n \in \mathbb{N}$, of positive numbers
converging to zero. For convenience, we will assume that
$(X_1,\rho_1)=(X,\rho)$, $T_1=T$.

Let us describe one step of the construction. Let $m \geq 1$, and let $T_m$ be a Borel automorphism of the separable metric space $(X_m,\rho_m)$.
Let $\si_m=\{h_{1,m},\dots,h_{n,m}\}$, where $h_{1,m}\!<\dots<h_{n,m}$ and ${n=n(m)}$. Apply Lemma~\ref{lem1} to the Borel automorphism~$T_m$ of the space $(X_m,\rho_m)$, the signature~$\si_m$, and the number~$\eps_m$. Find the corresponding collection of Borel sets $B_{1,m},\dots, B_{n,m}$. Put $X_{m+1}=\bigcup\limits_{j=1}^n B_{j,m}$ (the union of the tower bases) and $Y_m =\bigcup\limits_{j=1}^n \bar\tow_{h_{j,m}}[B_{j,m}]\!=\!\orb_{T_m}(X_{m+1})$  (the union of the towers themselves). Define a projection $\pi_m\colon Y_m \to X_{m+1}$ of points of the tower to its base:
$$
\pi_m(T_m^k x) = x\quad \text{for } x \in B_{j,m}, \quad k=0,\dots, h_{j,m}-1, \quad j=1,\dots,n.
$$
Since $T_m$ sends Borel sets to Borel sets, it is clear
that the map~$\pi_m$ is Borel. Define a partition~$\nu_m$ on~$Y_m$ as the partition into the preimages of points under the projection~$\pi_m$. In other words, elements of~$\nu_m$ are sets of the form $\{T_m^kx\colon k=0,\dots, h_{j,m}-1\}$, $j=1,\dots, n$, $x \in B_{j,m}$.

Define a map~$T_{m+1}$ on~$X_{m+1}$ as a recurrence map: namely,
for every point $x \in X_{m+1}$, put $T_{m+1}(x) = T_m^{r_m(x)}x,$
where $r_m(x) = \min\{k \colon k>0, T_m^kx\in X_{m+1}\}$.
One can easily see that for $j=1,\dots, n$, for $x \in B_{j,m}$,
we have $r_m(x)=h_{j,m}$, since, by construction, the set~$Y_m$
(the union of the towers) is invariant under~$T_m$. One can easily check that $T_{m+1}$ is a Borel automorphism of the separable metric space $(X_{m+1}, \rho_{m+1})$, where $\rho_{m+1}$ is the restriction of the metric~$\rho_m$ to~$X_{m+1}$.

Put
$$\widehat  X  = \bigcap_{m \geq 1}  \orb_T(X_m) \setminus \orb_T\Big(\bigcap_{m \geq 1}  X_m\Big).
$$
One can easily show, by induction on~$m \geq 1$, that for every point $x \in \widehat  X$ the intersection $\orb_{T}(x)\cap X_m$ consists of a single $T_m$-orbit, i.e., for every point $y \in \orb_{T}(x)\cap X_m$ we have $\orb_{T}(x)\cap X_m =  \orb_{T_m}(y)\cap X_m$. It follows that for $m\geq 1$ we have $\widehat  X\cap X_m = \widehat  X \cap Y_m$, hence
$$
\pi_m^{-1} \big(X_{m+1} \cap \widehat  X \big)=Y_m \cap \widehat  X = X_m \cap \widehat  X.
$$

For $m \geq 1$, we define a partition~$\xi_m$ of the set $\widehat  X=X_1\cap \widehat  X$ as the partition into the preimages of points of the set $X_{m+1} \cap \widehat  X$  under the map $\pi_m\circ \dots \circ \pi_1$. Clearly, for every point $x \in X_{m+1} \cap \widehat  X$ the preimage  $\pi_1^{-1}\circ \dots \circ \pi_m^{-1}(x)$ coincides with the part $\{T^k x \colon 0 \leq k<  N\}$ of the orbit $\orb_T(x)$, where $N$ is such that $T^N x = T_{m+1} x$. One can easily see that the partition $\xi_{m+1}$ is coarser than~$\xi_m$, and that the quotient of~$\xi_{m+1}$ by~$\xi_{m}$ is isomorphic to the restriction of the partition~$\nu_{m+1}$ to $Y_{m+1}\cap \widehat  X$ and hence is subordinate to the signature~$\si_{m+1}$.

Let us check that the set~$\widehat  X$  is metrically universal.
Given a measure  $\mu \in \mapp(X,T)$, let $\mu_m$, $m \geq 1$,
be the restriction of~$\mu$ to the set~$X_m$. Obviously, the measure~$\mu_m$ is invariant under~$T_m$. Points from~$X_m$ that are periodic for~$T_m$ are also periodic for~$T$, hence the measures~$\mu_m$ are aperiodic. Lemma~\ref{lem1} guarantees that $\mu_m(X_m\setminus Y_m)=0$. It follows that
$$
\mu(\orb_T(X_m)\setminus \orb_T(X_{m+1}))=0,
$$
and now one can easily show by induction that  $\mu(\orb_T(X_m))=1$ for every~$m$. Lemma~\ref{lem1} guarantees that $\mu(X_{m+1})<(\frac{1}{2}+\eps_m)\mu(X_m)$, hence $\mu(X_m) \to 0$, which implies that
$$
\mu\Big(\orb_T\Big(\bigcap\limits_{m \geq 1}  X_m\Big)\Big)=0.
$$
 Thus, $\mu(\widehat  X)=1.$

It remains to check that the constructed filtration is basic. It suffices to show that for $x \in \widehat  X$ the points~$x$ and~$T^{-1}x$ lie in the same element of the partition~$\xi_m$ for sufficiently large~$m$. But for every~$m$ the points for which $x$ and~$T^{-1}x$ do not lie in the same element of~$\xi_m$ are exactly the points of the set $X_{m+1}\cap \widehat  X$. These sets are nested, and their intersection over all $m \geq 1$ is empty.
\end{proof}

\section{The uniadic graph}\label{sec2}
Recall the construction of the uniadic (universal semidyadic)
graph (see~\cite{VZ18}). Level~$0$ of this graph contains
a single vertex. Further, having the set~$V_n$ of vertices
of level~$n$, we define the set~$V_{n+1}$ of vertices of
level~$n+1$ as $V_{n+1} = V_n^2 \sqcup \mathrm{copy}(V_n)$.
Every vertex~$w$ from~$V_n^2$ is understood as an ordered pair~$(u,v)$
of vertices of level~$n$, and we draw edges from~$u$ and~$v$ to~$w$
endowing them with a natural order: the edge~$(v,w)$ is greater
than~$(u,w)$. Every vertex~$w$ from~$\mathrm{copy}(V_n)$ is
understood as a copy of some vertex~$u$ of level~$n$,
and we draw a unique edge from the vertex~$u$ to the vertex~$w$.
The resulting graph endowed with the adic structure will be
called  \emph{uniadic} and denoted by~${\rm\uag}$ (see Fig.~2).
The term ``uniadic'' derives from the words ``universal''
and ``semi-dyadic,'' where the latter means that every vertex
of level~$n$, for $n \geq 1$, has one or two edges coming to it from vertices of level~$n-1$. The predecessors of this graph are dyadic graphs: the \emph{graph of unordered pairs} (see~\cite{V17}) and the \emph{graph of ordered pairs} (see~\cite{VZ17});
each of them is of considerable interest.

\begin{figure}[h!]%
\begin{center}
\includegraphics[width=0.6\columnwidth]{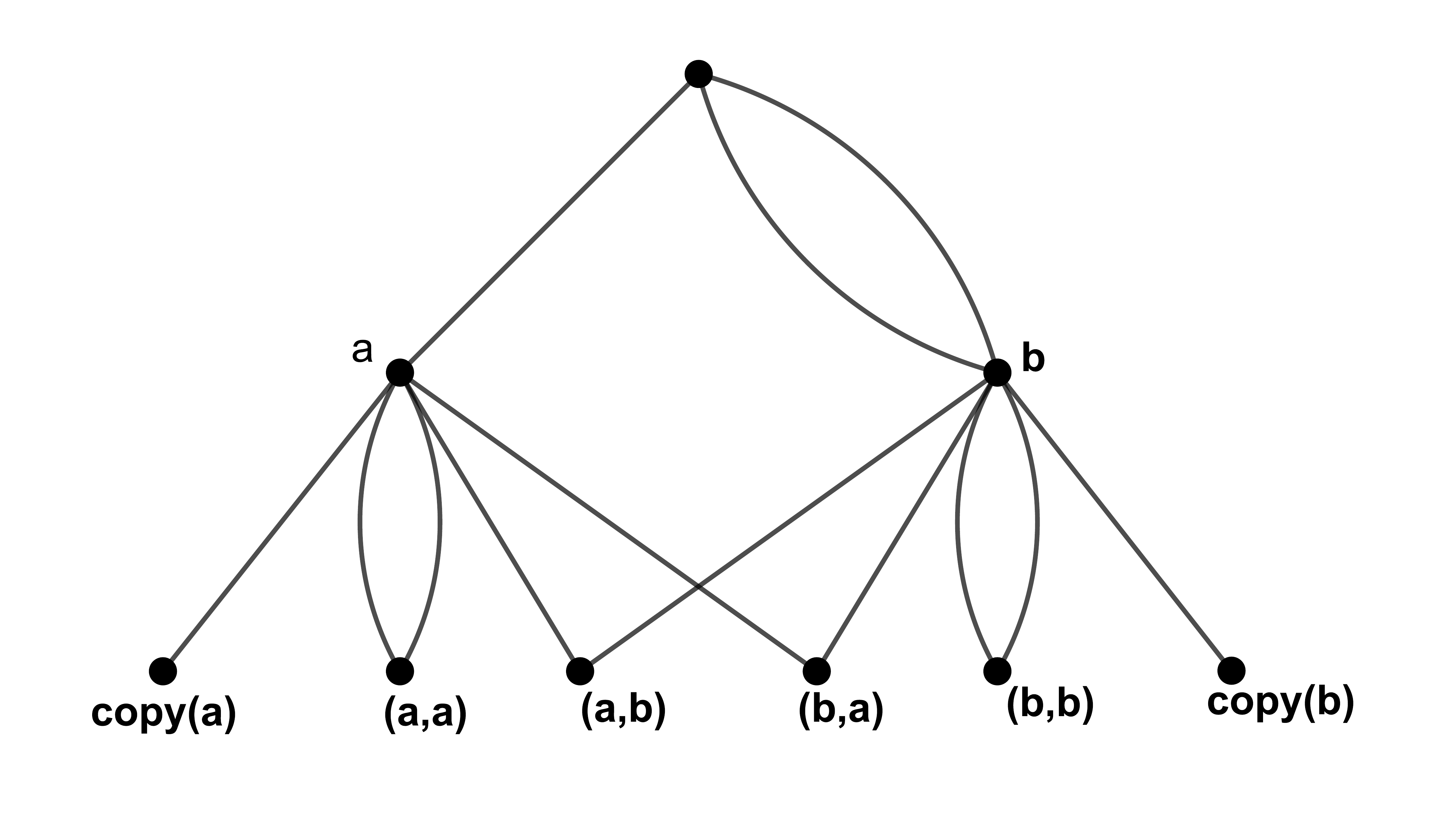}%
\end{center}
\vspace{-20pt}
\caption{Several first levels of the uniadic graph~\uag.}%
\label{figuag}%
\end{figure}

Recall two constructions applied to graded graphs: induction
and telescoping.

\begin{definition}
Let $\Gamma$ be a graded graph and $\{k_n\}_{n\geq 0}$ be a strictly
increasing sequence of nonnegative integers with  $k_0=0$.
We define a \emph{telescoping} of the graph~$\Gamma$ as follows.
The $n$th level of the new graph contains the vertices corresponding to the vertices of level~$k_n$ in~$\Gamma$. A vertex of level~$n$ and a vertex of level~$n+1$ are joined by a multiple edge with multiplicity equal to the number of paths in~$\Gamma$ between the corresponding vertices. An adic order on the edges of the new graph is determined by the adic order on the corresponding paths in the original graph.
\end{definition}

One can easily see that the adic shifts on the path spaces of the original graph and the telescoped graph are isomorphic.

\begin{definition}
We say that a graded graph~$\Gamma_1$ is an \emph{induced subgraph} of a graded graph~$\Gamma$ if the set of vertices and the set of edges of~$\Gamma_1$ are subsets in the set of vertices and the set of edges of~$\Gamma$, respectively, and, besides, if $v$ is a vertex of~$\Gamma_1$, then $\Gamma_1$ contains all edges of~$\Gamma$ coming to~$v$ from vertices of the previous level. An order on the edges is inherited in a natural way.
\end{definition}

If $\Gamma_1$ is an induced subgraph of~$\Gamma$, then its path space~$\T(\Gamma_1)$ is a subset of the path space~$\T(\Gamma)$ of~$\Gamma$ invariant under the adic shift in~$\Gamma$.

To prove the main theorem, we will use the following proposition proved in~\cite{VZ18}.

\begin{proposition}\label{pro3}
Let $\Gamma$ be a graded graph with an adic structure whose
every vertex, except the vertex of level~$0$, has at least two
edges coming to it from above. Then there is an induced subgraph
of the uniadic graph~${\rm\uag}$ such that a telescoping of this subgraph is isomorphic to~$\Gamma$.
\end{proposition}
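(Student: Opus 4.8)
The plan is to build the required induced subgraph $\Gamma_1\subseteq{\rm\uag}$ together with a telescoping $0=k_0<k_1<k_2<\dots$ level by level, so that under this telescoping the vertices of level~$n$ of~$\Gamma$ are exactly the vertices of level~$k_n$ of~$\Gamma_1$. I proceed by induction: at each stage I fix an injection of the level-$n$ vertices of~$\Gamma$ onto a set of pairwise distinct vertices $\tilde u$ at level~$k_n$ of~${\rm\uag}$; the base case is immediate, since both $\Gamma$ and~${\rm\uag}$ have a single vertex at level~$0$. The heart of the construction is a \emph{gadget} attached to each vertex: for a vertex~$w$ of level~$n+1$ in~$\Gamma$ whose incoming edges, listed in the adic order, issue from vertices $u_{j_1},\dots,u_{j_d}$ (repetitions allowed, with $d=d(w)\ge 2$ by hypothesis), I want a vertex $\tilde w$ of~${\rm\uag}$ at level~$k_{n+1}$ admitting exactly these $d$ downward paths to the $\tilde u$'s, in this prescribed order.

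To build the gadget I use the two elementary operations of the uniadic graph: forming an ordered pair $(a,b)$, which creates a vertex with two incoming edges ordered so that the $a$-edge precedes the $b$-edge, and forming the copy of a vertex, which creates a vertex with a single incoming edge. The combining device is a \emph{left comb}: set $c_1=\tilde u_{j_1}$ and, having $c_{i-1}$, form $c_i=(c_{i-1},a_i)$, where $a_i$ is $\tilde u_{j_i}$ lifted to the level of~$c_{i-1}$ by a chain of copies. Since the adic order on the paths entering a fixed vertex is top-down lexicographic (paths are compared at the highest level at which they diverge, using the edge order there), the $d$ downward paths from $c_d$ end at $\tilde u_{j_1},\dots,\tilde u_{j_d}$ in exactly the prescribed order; this order bookkeeping is the point that must be checked carefully, and the hypothesis $d\ge 2$ guarantees that at least one genuine pairing occurs, so each gadget has positive height. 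Copies change neither the number nor the order of the incoming paths, so I can then pad each gadget by further copies until, for a common height $t_n$, every level-$(n+1)$ gadget terminates at the single level $k_{n+1}=k_n+t_n$, as a telescoping requires. The induced-subgraph condition then holds automatically: a pair vertex, respectively a copy vertex, carries in~${\rm\uag}$ precisely the two, respectively one, incoming edges used in the gadget, so the closure of our chosen vertices under ``include all parents'' is exactly the union of the gadgets.

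The step I expect to be the main obstacle is injectivity at each level, i.e.\ ensuring that distinct vertices of~$\Gamma$ give distinct vertices of~$\Gamma_1$. This fails for the naive comb whenever two vertices $w\ne w'$ of~$\Gamma$ carry identical ordered incoming data, for then their combs literally coincide. The remedy is to exploit the freedom in where the copies are inserted: a fixed ordered family of $d$ paths to a single vertex can be realized by infinitely many genuinely different vertices of~${\rm\uag}$, obtained by interleaving the pairings with copy-chains of varying lengths. For instance, for a double edge from a single source $\tilde u$, the vertices $(\mathrm{copy}^i\tilde u,\mathrm{copy}^i\tilde u)$, each padded up to height~$t_n$, are pairwise distinct for different~$i$ yet all carry two ordered paths to~$\tilde u$. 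Since each level of~$\Gamma$ is finite, choosing $t_n$ large enough supplies more distinct realizations than needed, and the assignment can be made injective.

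Finally I would verify that the telescoping of~$\Gamma_1$ along $\{k_n\}$ is isomorphic to~$\Gamma$. Between levels $k_n$ and $k_{n+1}$ the number of paths from~$\tilde u$ to~$\tilde w$ equals the multiplicity of the edge $u\to w$ in~$\Gamma$, and the induced adic order on these paths matches that of~$\Gamma$ by the top-down computation above. Here it is worth remarking that distinct gadgets may legitimately share intermediate vertices (say a common initial pair or copy-chain); but because each gadget is a tree hanging below its terminal vertex~$\tilde w$, such sharing only adds branches leading toward other terminal vertices and, the graph being graded, can never create a spurious path into~$\tilde w$. Hence no disjointness of gadgets is required, and the desired isomorphism of the telescoped graph with~$\Gamma$ follows.
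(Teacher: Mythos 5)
Note first that the paper itself contains no proof of Proposition~\ref{pro3} --- it is imported verbatim from~\cite{VZ18} --- so the only comparison available is with the construction that proposition rests on there, which is in essence the one you give: encode the ordered list of incoming edges of each vertex of~$\Gamma$ as an iterated ordered-pair ``comb'' in~${\rm\uag}$, using copy-chains both to equalize levels (so that a telescoping along the resulting levels $k_n$ makes sense) and to separate vertices of~$\Gamma$ carrying identical incoming data. Your proposal is correct, and it places the hypothesis of at least two incoming edges exactly where it is needed: the copy operation in~${\rm\uag}$ is deterministic, so a vertex of~$\Gamma$ with a single incoming edge would admit only one induced-subgraph realization over a given source and injectivity could fail, whereas the presence of at least one genuine pairing yields the infinitely many distinct realizations (pairings interleaved with copy-chains of varying lengths) on which your injectivity argument relies; your remaining verifications --- the top-down computation of the adic order on the comb's paths, and the fact that inducedness makes the set of paths into~$\tilde w$ inside the subgraph equal to its set of paths in~${\rm\uag}$, so shared intermediate vertices cannot create spurious paths --- are also the right ones.
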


\section{Proof of Theorem~\ref{thborel}}
To prove the theorem, it remains to apply the obtained results. Let $T$ be an aperiodic Borel automorphism of a separable metric space~$X$. Choose the following sequence of signatures: $\si_k=\{2,3\}$ for $k \geq 1$. Further, applying Theorem~\ref{th1}, find a metrically universal subset $\widetilde    X \subset X$ and a basic filtration $\Xi=\{\xi_n\}_{n\geq 0}$ on~$\widetilde    X$ subordinate to the signature. Using Proposition~\ref{pro2}, color the filtration so as to make it combinatorially definite. Proposition~\ref{pro1} shows that the colored basic filtration~$\Xi$ is isomorphic to the tail filtration of the graded graph $\Gamma=\Gamma[\Xi]$ with the adic shift. Note that every vertex of~$\Gamma$, except the vertex of level~$0$, has either two or three edges coming to it from above: this follows immediately from the construction of the graph and the fact that the filtration~$\Xi$ is subordinate to the chosen signature. It remains to apply the last ingredient of the proof, Proposition~\ref{pro3}. It allows us to embed the path space of the graph~$\Gamma[\Xi]$  equivariantly into the path space of the uniadic graph.

\end{document}